\date{\today}
\newtheorem{theorem}{Theorem}[section]
\newtheorem{proposition}[theorem]{Proposition}
\newtheorem{corollary}[theorem]{Corollary}
\newtheorem{lemma}[theorem]{Lemma}
\theoremstyle{definition}
\newtheorem{example}[theorem]{Example}%[section]
\newtheorem{remark}[theorem]{Remark}%[section]
\begin{document}

\title[On the dichotomy of a locally compact semitopological bicyclic monoid ...] {On the dichotomy of a locally compact semitopological bicyclic monoid with adjoined zero}

\author{Oleg~Gutik}
\address{Faculty of Mathematics, National University of Lviv,
Universytetska 1, Lviv, 79000, Ukraine}
\email{o\underline{\hskip5pt}\,gutik@franko.lviv.ua,
ovgutik@yahoo.com}

\keywords{semigroup,  semitopological semigroup, topological semigroup, bicyclic monoid, locally compact space, \v{C}ech-complete space, metrizable space, zero, compact ideal}

\subjclass[2010]{Primary 22A15, 54D45, 54H10; Secondary 54A10, 54D30, 54D40.}

\begin{abstract}
We prove that a Hausdorff locally compact semitopological bicyclic semigroup with adjoined zero $\mathscr{C}^0$ is either compact or discrete. Also we show that the similar statement holds for a locally compact semitopological bicyclic semigroup with an adjoined compact ideal and construct an example which witnesses that a counterpart of the statements does not hold when $\mathscr{C}^0$ is a \v{C}ech-complete metrizable topological inverse semigroup.
\end{abstract}

\maketitle

\section{Introduction and preliminaries}

Further we shall follow the terminology of \cite{Carruth-Hildebrant-Koch-1983-1986, Clifford-Preston-1961-1967, Engelking-1989, Ruppert-1984}.
Given a semigroup $S$, we shall
denote the set of idempotents of $S$ by $E(S)$.
A semigroup $S$ with the adjoined  zero  will be denoted by
$S^0$  (cf.
\cite{Clifford-Preston-1961-1967}).

A semigroup $S$ is called \emph{inverse} if for every $x\in S$ there exists a unique $y\in S$ such that $xyx=x$ and $yxy=y$. Later such an element $y$ will be denoted by $x^{-1}$ and will be called the \emph{inverse of} $x$. A map $\operatorname{\textsf{inv}}\colon S\to S$ which assigns to every $s\in S$ its inverse is called \emph{inversion}.

In this paper all topological spaces are Hausdorff. If $Y$ is a subspace of a topological space $X$ and $A\subseteq Y$, then by $\operatorname{cl}_Y(A)$ we denote the topological closure of $A$ in $Y$\textcolor[rgb]{1.00,0.00,0.00}.

A \emph{semitopological} (\emph{topological}) \emph{semigroup} is a topological space with separately continuous (jointly continuous) semigroup operations. An inverse topological semigroup with continuous inversion is called a \emph{topological inverse semigroup}.

We recall that a topological space $X$ is:
\begin{itemize}
  \item \emph{locally compact} if every point $x$ of $X$ has an open neighbourhood $U(x)$ with the compact closure $\operatorname{cl}_X(U(x))$;
  \item \emph{\v{C}ech-complete} if $X$ is Tychonoff and there exists a compactification $cX$ of $X$ such that the remainder $cX\setminus c(X)$ is an $F_\sigma$-set in $cX$.
\end{itemize}

The \emph{bicyclic semigroup} (or the \emph{bicyclic monoid}) ${\mathscr{C}}(p,q)$ is a semigroup with the identity $1$ generated by two elements $p$ and $q$ with only one condition $pq=1$. The distinct elements of the bicyclic monoid are exhibited in the following array:
\begin{equation*}
\begin{array}{ccccc}
  1      & p      & p^2    & p^3     & \cdots \\
  q      & qp     & qp^2   & qp^3    & \cdots \\
  q^2    & q^2p   & q^2p^2 & q^2p^3  & \cdots \\
  q^3    & q^3p   & q^3p^2 & q^3p^3  & \cdots \\
  \vdots & \vdots & \vdots & \vdots  & \ddots
\end{array}
\end{equation*}

By $\mathscr{C}^0$ we denote the bicyclic monoid with adjoined zero, i.e., $\mathscr{C}^0=\mathscr{C}(p,q)\sqcup\{0\}$.

The bicyclic monoid is a combinatorial bisimple $F$-inverse semigroup and it plays an
important role in the algebraic theory of semigroups and in the
theory of topological semigroups. For example the well-known
Andersen's result~\cite{Andersen-1952} states that a ($0$--)simple
semigroup with an idempotent is completely ($0$--)simple if and only if it does not
contain an isomorphic copy of the bicyclic semigroup. The bicyclic semigroup admits only the
discrete semigroup topology and if a topological semigroup $S$ contains it as a dense subsemigroup then
${\mathscr{C}}(p,q)$ is an open subset of $S$~\cite{Eberhart-Selden-1969}. Bertman and  West in \cite{Bertman-West-1976} extended this result for the case of semitopological semigroups. Stable and $\Gamma$-compact topological semigroups do not contain the
bicyclic semigroup~\cite{Anderson-Hunter-Koch-1965, Hildebrant-Koch-1988}. The problem of an embedding of the bicyclic monoid into compact-like topological semigroups is discussed in \cite{Banakh-Dimitrova-Gutik-2009, Banakh-Dimitrova-Gutik-2010, Gutik-Repovs-2007}.

In \cite{Eberhart-Selden-1969} Eberhart and Selden proved that if the bicyclic monoid ${\mathscr{C}}(p,q)$ is a dense subsemigroup of a topological monoid $S$ and $I=S\setminus{\mathscr{C}}(p,q)\neq\varnothing$ then $I$ is a two-sided ideal of the semigroup $S$. Also, there they described the closure of the bicyclic monoid ${\mathscr{C}}(p,q)$ in a locally compact topological inverse semigroup. The closure of the bicyclic monoid in a countably compact (pseudocompact) topological semigroups was studied in~\cite{Banakh-Dimitrova-Gutik-2010}.

The well known A.~Weil Theorem states that \emph{every locally compact monothetic topological group $G$} (i.e., $G$ contains a cyclic dense subgroup) \emph{is either compact or discrete} (see \cite{Weil-1938}). Locally compact and compact monothetic topological semigroups was studied by Hewitt \cite{Hewitt-1956},  Hofmann \cite{Hofmann-1960}, Koch  \cite{Koch-1957}, Numakura \cite{Numakura-1952} and others (see more information on this topics in the books \cite{Carruth-Hildebrant-Koch-1983-1986} and \cite{Hofmann-Mostert-1966}). Koch in \cite{Koch-1969} posed the following problem: ``\emph{If $S$ is a locally compact monothetic semigroup and $S$ has an identity, must $S$ be compact?}'' (see \cite[Vol. 2, p.~144]{Carruth-Hildebrant-Koch-1983-1986}). From the other side, Zelenyuk in \cite{Zelenyuk-1988} constructed a countable locally compact topological semigroup without unit which is neither compact nor discrete.

In this paper we prove that a Hausdorff locally compact semitopological bicyclic semigroup with adjoined zero $\mathscr{C}^0$ is either compact or discrete. Also we show that the similar statement holds for a locally compact semitopological bicyclic semigroup with an adjoined compact ideal and construct an example which witnesses that a counterpart of the statements does not hold when $\mathscr{C}^0$ is a \v{C}ech-complete metrizable topological inverse semigroup.

\section{On a locally compact semitopological bicyclic semigroup with adjoined zero}

The following proposition generalizes Theorem~I.3 from \cite{Eberhart-Selden-1969}.

\begin{proposition}\label{proposition-2.1}
If the bicyclic monoid ${\mathscr{C}}(p,q)$ is a dense subsemigroup of a semitopological monoid $S$ and $I=S\setminus{\mathscr{C}}(p,q)\neq\varnothing$ then $I$ is a two-sided ideal of the semigroup $S$.
\end{proposition}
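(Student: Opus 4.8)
The plan is to show that $I$ is closed under multiplication by arbitrary elements of $S$ on both sides, i.e. that $SI \subseteq I$ and $IS \subseteq I$. Equivalently, I must show that a product of an element of $S$ with an element of $I$ never lands back in $\mathscr{C}(p,q)$. The natural strategy is contrapositive: suppose $sa \in \mathscr{C}(p,q)$ for some $s \in S$ and $a \in I$, and derive a contradiction by exploiting the fact that $\mathscr{C}(p,q)$ is dense in $S$ together with separate continuity of multiplication.

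First I would recall the key structural feature of the bicyclic monoid that makes such arguments work: every element $q^m p^n$ has a well-understood multiplication table, and crucially each element of $\mathscr{C}(p,q)$ is left- and right-cancellative in an appropriate sense, and moreover the map $x \mapsto cx$ (or $x \mapsto xc$) for a fixed $c \in \mathscr{C}(p,q)$ is injective on $\mathscr{C}(p,q)$ with an open image, reflecting that $\mathscr{C}(p,q)$ carries only the discrete topology. The plan is then: fix $a \in I$ and suppose $c := sa \in \mathscr{C}(p,q)$ for some $s \in S$. Using separate continuity, the map $x \mapsto xa$ is continuous, so the preimage of the open singleton $\{c\}$ (open in $\mathscr{C}(p,q)$, hence $\{c\}$ is open in $S$ since $\mathscr{C}(p,q)$ is open in $S$ by the Eberhart–Selden/Bertman–West result that the bicyclic monoid is open in any containing semitopological semigroup) is an open neighbourhood $U$ of $s$. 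By density of $\mathscr{C}(p,q)$, there is an element $t \in U \cap \mathscr{C}(p,q)$, so $ta = c \in \mathscr{C}(p,q)$ with $t \in \mathscr{C}(p,q)$.

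Now the crux: I want to conclude from $ta = c$ with $t,c \in \mathscr{C}(p,q)$ and $a \in S$ that $a$ must itself lie in $\mathscr{C}(p,q)$, contradicting $a \in I$. Since $t$ is an element of the bicyclic monoid, it has a left inverse behaviour under the relation $pq = 1$; concretely, writing $t = q^k p^l$, I can left-multiply by a suitable word $w \in \mathscr{C}(p,q)$ so that $wt = 1$ (the identity), using that $p^l q^k \cdot q^k p^l$-type computations recover the identity in the monoid — more precisely one uses that $p^l q^l = 1$ and that there is $w$ with $wt$ equal to an idempotent $e = q^r p^r$ that acts as a left identity on $a$ after a further right adjustment. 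The cleanest route is: from $ta = c$, left-multiply by $p^l$ (recall $p^l q^l = 1$) to get $p^l q^k p^l \cdot$ arrangement, eventually isolating $a = (\text{word in } \mathscr{C})\cdot c \in \mathscr{C}(p,q)$, using associativity (which holds since the operation is a genuine semigroup operation) and the monoid relations. This yields $a \in \mathscr{C}(p,q)$, the desired contradiction, establishing $IS \subseteq I$; the argument for $SI \subseteq I$ is symmetric using the continuity of $x \mapsto sx$.

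The main obstacle I anticipate is the algebraic step of cancelling $t$ to recover $a$, because in a mere semigroup $S$ the element $t \in \mathscr{C}(p,q)$ need not be invertible in $S$, so one cannot simply multiply by $t^{-1}$. The point to get right is that the relation $pq = 1$ in the monoid provides a genuine two-sided identity $1 \in \mathscr{C}(p,q) \subseteq S$, and for $t = q^k p^l$ one has $p^l q^k \cdot$-words that, combined with $1$, let us solve for $a$ purely inside the monoid relations applied to the equation $ta = c$; I would therefore spell out carefully, for $t = q^k p^l$, a word $v \in \mathscr{C}(p,q)$ with $v t = 1$ whenever $k=0$, and handle the general case by first reducing to the situation where the product $ta$ is pinned down coordinate-wise. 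Care is needed because $q^k p^l$ with $k > 0$ is not left-invertible even in the monoid, so the genuinely correct formulation is to choose the approximating $t \in \mathscr{C}(p,q)$ of the form $p^n$ (a right-invertible, indeed the open neighbourhood can be taken to force $t$ into the top row of the array), for which $q^n t = q^n p^n$ is an idempotent and $p^n \cdot q^n = 1$ gives genuine left-invertibility; this is exactly where the openness of $\mathscr{C}(p,q)$ in $S$ and the fine structure of the bicyclic monoid must be combined, and it is the step I expect to require the most attention.
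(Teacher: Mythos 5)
Your topological reduction is sound and agrees with the paper's: by the Bertman--West extension of the Eberhart--Selden theorem, $\mathscr{C}(p,q)$ is open and discrete in $S$, so $\{c\}$ is open, separate continuity makes $\{x\in S\colon xa=c\}$ an open neighbourhood of $s$, and density yields $t\in\mathscr{C}(p,q)$ with $ta=c$. The genuine gap is in the algebraic core that follows. Your premise that left translation by a fixed element of $\mathscr{C}(p,q)$ is injective (or cancellable) on $\mathscr{C}(p,q)$ is false: for example $qp\cdot p=qp^{2}=qp\cdot qp^{2}$, so $\lambda_{qp}$ is not injective; the true statement is only that the fibers $\{u\colon tu=c\}$ are \emph{finite} (Lemma~I.1 of \cite{Eberhart-Selden-1969}), and it is this finiteness, not cancellation, that carries the paper's proof. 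Your attempted repair --- forcing the approximating element $t$ into the top row $\{p^{n}\}$ by shrinking the neighbourhood --- fails on both counts. Topologically, density puts elements of $\mathscr{C}(p,q)$ in every nonempty open set but not elements of a prescribed row or column; indeed, in the topology $\tau_{\operatorname{\textsf{1}}}$ of Example~\ref{example-2.11} (which satisfies the hypotheses of the proposition with $I=\{0\}$) the basic neighbourhoods $U_{n}$ of zero contain no element of the top row at all. Algebraically, the choice is also backwards: to cancel $t$ from the left of $ta=c$ you need $w$ with $wt=1$, and $p^{n}$ has only a right inverse ($p^{n}q^{n}=1$) and no left inverse, so the top row is precisely the case where cancellation is impossible; left cancellation would require $t=q^{n}$ (first column, $w=p^{n}$). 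Worse, no left multiple of $q^{k}p^{l}$ with $l>0$ lies in the first column, since left multiplication never decreases the exponent of $p$; so the bad case cannot be reduced away, and the approach collapses exactly where it is needed.

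The repair is to drop cancellation and use the paper's counting argument: having found $t\in\mathscr{C}(p,q)$ with $ta=c$, apply separate continuity once more, now to $\lambda_{t}$. The set $V=\{x\in S\colon tx=c\}$ is open and contains $a\in I$; since $a\notin\mathscr{C}(p,q)$, the intersection $V\cap\mathscr{C}(p,q)$ must be infinite (a finite intersection could be deleted from $V$, leaving a nonempty open set missing the dense set $\mathscr{C}(p,q)$). This produces infinitely many $u\in\mathscr{C}(p,q)$ with $tu=c$, contradicting Lemma~I.1 of \cite{Eberhart-Selden-1969}. With this substitution your argument becomes correct, and in fact slightly more uniform than the paper's proof, which treats the cases $x\in\mathscr{C}(p,q)$ and $x\in I$ separately rather than approximating an arbitrary $s\in S$ first. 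A minor further point: where you write $1\cdot a=a$ for $a\in I$ you are assuming that the identity of $\mathscr{C}(p,q)$ is the identity of $S$; this is true, but needs the standard argument that $\lambda_{1}$ is continuous and agrees with the identity map on the dense set $\mathscr{C}(p,q)$, hence everywhere, since $S$ is Hausdorff.
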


\begin{proof}
Fix an arbitrary element $y\in I$. If $xy=z\notin I$ for some $x\in{\mathscr{C}}(p,q)$ then there exists an open neighbourhood $U(y)$ of the point $y$ in the space $S$ such that $\{x\}\cdot U(y)=\{z\}\subset{\mathscr{C}}(p,q)$. The neighbourhood $U(y)$ contains infinitely many elements of the semigroup ${\mathscr{C}}(p,q)$. This contradicts Lemma~I.1~\cite{Eberhart-Selden-1969}, which states that for each $v,w\in \mathscr{C}(p,q)$ both sets $\{u\in \mathscr{C}(p,q) \colon vu=w\}$ and $\{u\in \mathscr{C}(p,q) \colon uv=w\}$ are finite. The obtained contradiction implies that $xy\in I$ for all $x\in  \mathscr{C}(p,q)$ and $y\in I$. The proof of the statement that $yx\in I$ for all $x\in  \mathscr{C}(p,q)$ and $y\in I$ is similar.

Suppose to the contrary that $xy=w\notin I$ for some $x,y\in I$. Then $w\in {\mathscr{C}}(p,q)$ and the separate continuity of the semigroup operation in $S$ implies that there exist open neighbourhoods $U(x)$ and $U(y)$ of the points $x$ and $y$ in $S$, respectively, such that $\{x\}\cdot U(y)=\{w\}$ and $U(x)\cdot \{y\}=\{w\}$. Since both neighbourhoods $U(x)$ and $U(y)$ contain infinitely many elements of the semigroup ${\mathscr{C}}(p,q)$, both equalities $\{x\}\cdot U(y)=\{w\}$ and $U(x)\cdot \{y\}=\{w\}$ contradict mentioned above Lemma~I.1 from~\cite{Eberhart-Selden-1969}. The obtained contradiction implies that $xy\in I$.
\end{proof}

For every non-negative integer $n$ we put
\begin{equation*}
    \mathscr{C}[q^n]=\left\{q^np^i\in\mathscr{C}(p,q)\colon i=0,1,2,\ldots\right\} \: \hbox{ and } \:  \mathscr{C}[p^n]=\left\{q^ip^n\in\mathscr{C}(p,q)\colon i=0,1,2,\ldots\right\}.
\end{equation*}

\begin{lemma}\label{lemma-2.2}
Let $(\mathscr{C}^0,\tau)$ be a locally compact semitopological semigroup. Then the following assertions hold:
\begin{itemize}
  \item[$(1)$] for every open neighbourhood $U(0)$ of zero in $(\mathscr{C}^0,\tau)$ there exists an open compact neighbourhood $V(0)$ of zero in $(\mathscr{C}^0,\tau)$ such that $V(0)\subseteq U(0)$;
  \item[$(2)$] for every open compact neighbourhood $U(0)$ of zero in $(\mathscr{C}^0,\tau)$ and every open neighbourhood $V(0)$ of zero in $(\mathscr{C}^0,\tau)$ the set $U(0)\cap V(0)$ is compact and open, and the set $U(0)\setminus V(0)$ is finite.
\end{itemize}
\end{lemma}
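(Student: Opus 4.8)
The plan is to first pin down the topological structure of $(\mathscr{C}^0,\tau)$ away from the zero, and then to derive both assertions by elementary compactness arguments. The crucial preliminary observation is that every point of $\mathscr{C}(p,q)$ is isolated in $(\mathscr{C}^0,\tau)$. Indeed, if $0$ is isolated then $(\mathscr{C}^0,\tau)$ is discrete and both statements hold trivially; otherwise $\mathscr{C}(p,q)$ is dense in $\mathscr{C}^0$, and since the bicyclic monoid admits only the discrete topology and is open in any semitopological semigroup in which it is densely contained (the Bertman--West extension of the Eberhart--Selden theorem, cf. \cite{Bertman-West-1976, Eberhart-Selden-1969}), the subspace $\mathscr{C}(p,q)$ is open and discrete. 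Two consequences will be used repeatedly: every subset of $\mathscr{C}^0$ containing $0$ is closed (its complement lies in $\mathscr{C}(p,q)$ and is therefore open), and every compact subset of $\mathscr{C}^0$ that does not contain $0$ is finite (being a compact subset of a discrete space).

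For assertion $(1)$, local compactness supplies an open neighbourhood $W(0)$ of zero whose closure $C=\operatorname{cl}(W(0))$ is compact. I would then simply set $V(0)=W(0)\cap U(0)$. This set is open and contains $0$, hence is closed by the observation above; being a closed subset of the compact set $C$, it is compact; and it is contained in $U(0)$ by construction. Thus $V(0)$ is the required open compact neighbourhood.

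For assertion $(2)$, fix an open compact neighbourhood $U(0)$ and an arbitrary open neighbourhood $V(0)$. The intersection $U(0)\cap V(0)$ is open and contains $0$, hence is closed, and as a closed subset of the compact set $U(0)$ it is compact. For the difference, note that $U(0)\setminus V(0)=U(0)\cap(\mathscr{C}^0\setminus V(0))$ is closed (the intersection of the closed set $\mathscr{C}^0\setminus V(0)$ with the closed set $U(0)$) and is contained in the compact set $U(0)$, so it is compact; since it misses $0$, the second consequence above forces it to be finite.

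The only substantive step is the preliminary reduction: everything downstream is a routine interplay of the facts ``sets containing $0$ are closed'', ``closed subsets of compact sets are compact'', and ``compact subsets of the discrete part are finite''. Accordingly, the main point requiring care is the justification that each point of $\mathscr{C}(p,q)$ is isolated, that is, the correct invocation of the structural results on dense bicyclic subsemigroups in the semitopological setting, together with the dichotomy between $0$ being isolated and $0$ being a genuine limit point of $\mathscr{C}(p,q)$.
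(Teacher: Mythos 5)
Your proof is correct and rests on the same key point as the paper's: the Bertman--West theorem forces every element of $\mathscr{C}(p,q)$ to be an isolated point of $(\mathscr{C}^0,\tau)$, after which both assertions follow by routine compactness. The only differences are cosmetic: for $(1)$ the paper invokes regularity of a locally compact Hausdorff space to shrink $U(0)$ and then notes that the shrunken neighbourhood coincides with its own closure, and for $(2)$ it uses a disjoint open cover of the compact set $U(0)$, whereas you derive both claims from the equivalent observations that every set containing $0$ is closed and every compact set missing $0$ is finite.
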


\begin{proof}
The statements of the lemma are trivial in the case when $\tau$ is the discrete topology on $\mathscr{C}^0$, and hence later we shall assume that the topology $\tau$ is non-discrete.

$(1)$ Let $U(0)$ be an arbitrary open neighbourhood of zero in $(\mathscr{C}^0,\tau)$. By Theorem~3.3.1 from \cite{Engelking-1989} the space $(\mathscr{C}^0,\tau)$ is regular. Since it is locally compact, there exists an open neighbourhood $V(0)\subseteq U(0)$ of zero in $(\mathscr{C}^0,\tau)$ such that $\operatorname{cl}_{\mathscr{C}^0}(V(0))\subseteq U(0)$. Since all non-zero elements of the semigroup $\mathscr{C}^0$ are isolated points in $(\mathscr{C}^0,\tau)$, $\operatorname{cl}_{\mathscr{C}^0}(V(0))=V(0)$, and hence our assertion holds.

$(2)$ Let $U(0)$ be an arbitrary compact open neighbourhood of zero in $(\mathscr{C}^0,\tau)$. Then for an arbitrary open neighbourhood $V(0)$ of zero in $(\mathscr{C}^0,\tau)$ the family
\begin{equation*}
\mathscr{U}=\left\{V(0),\left\{\{x\}\colon x\in U(0)\setminus V(0)\right\}\right\}
\end{equation*}
is an open cover of $U(0)$. Since the family $\mathscr{U}$ is disjoint, it is finite. So the set $U(0)\setminus V(0)$ is finite and the set $U(0)\cap V(0)$ is compact.
\end{proof}

\begin{lemma}\label{lemma-2.3}
If $(\mathscr{C}^0,\tau)$ is a locally compact non-discrete semitopological semigroup, then for each open neighbourhood $U(0)$ of zero in $(\mathscr{C}^0,\tau)$ there exist non-negative integers $i$ and $j$ such that both sets $\mathscr{C}[q^i]\cap U(0)$ and $\mathscr{C}[p^j]\cap U(0)$ are infinite.
\end{lemma}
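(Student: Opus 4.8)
The plan is to work with a single compact open neighbourhood of zero and to show that separate continuity forces it to swallow an entire ``corner'' of the bicyclic array, which simultaneously yields an infinite row and an infinite column. First I would dispose of the discrete case and fix, using Lemma~\ref{lemma-2.2}$(1)$, a compact open neighbourhood $V(0)\subseteq U(0)$ of zero. Since $\tau$ is non-discrete and all non-zero points are isolated, $0$ is not isolated, so every open neighbourhood of $0$ --- in particular $V(0)$ --- meets $\mathscr{C}(p,q)$ in an infinite set. The engine of the proof is a pair of absorption facts. The right translation $\rho_p\colon x\mapsto xp$ and the left translation $\lambda_q\colon x\mapsto qx$ are continuous by separate continuity and fix $0$; hence $\rho_p^{-1}(V(0))$ and $\lambda_q^{-1}(V(0))$ are open neighbourhoods of $0$, and by Lemma~\ref{lemma-2.2}$(2)$ the sets $F:=V(0)\setminus\rho_p^{-1}(V(0))$ and $G:=V(0)\setminus\lambda_q^{-1}(V(0))$ are finite. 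Thus $x\in V(0)\setminus F$ implies $xp\in V(0)$, and $x\in V(0)\setminus G$ implies $qx\in V(0)$.

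Next I would translate these facts into the geometry of the array. Since $\rho_p(q^ap^b)=q^ap^{b+1}$ moves a point one step to the right within its row and $F$ is finite, once a point $q^ap^b\in V(0)$ has $p$-exponent exceeding every $p$-exponent occurring in $F$, all its right-neighbours $q^ap^{b+1},q^ap^{b+2},\ldots$ lie in $V(0)$; that is, the whole tail of the row $\mathscr{C}[q^a]$ is absorbed. Symmetrically, because $\lambda_q(q^ap^b)=q^{a+1}p^b$ moves a point one step down its column and $G$ is finite, a point of $V(0)$ with sufficiently large $q$-exponent drags the whole tail of its column into $V(0)$. The crucial refinement is that these two mechanisms combine: applying $\lambda_q$ to an \emph{infinite} row-tail contained in $V(0)$ loses only the finitely many members lying in $G$, so it produces an infinite tail of the next row; iterating fills an entire quadrant $\{q^ap^b\colon a\ge a_0,\ b\ge\beta\}\subseteq V(0)$.

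Finally I would assemble the conclusion. As $V(0)\cap\mathscr{C}(p,q)$ is infinite while each finite box $\{q^ap^b\colon a<M,\ b<N\}$ is finite, $V(0)$ must contain a point whose $p$-exponent exceeds all those occurring in $F$, or whose $q$-exponent exceeds all those occurring in $G$. In the first case right-absorption gives an infinite row-tail in $V(0)$, and the downward-propagation above then fills a quadrant; in the symmetric second case the column is filled first and $\rho_p$ propagates it rightward. Either way $V(0)\subseteq U(0)$ contains a quadrant, which manifestly meets some row $\mathscr{C}[q^i]$ and some column $\mathscr{C}[p^j]$ in infinite sets, as required. The main obstacle to watch is precisely this interlocking: one cannot secure an infinite row and an infinite column by two independent arguments, since a priori $V(0)$ might meet every row finitely (all its mass in finitely many columns) or the other way around. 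The point is to use a single infinite line to bootstrap, through the finitely-many-exceptions translations, the full corner that delivers both at once; care is also needed to record uniform bounds on the exponents occurring in $F$ and $G$ so that these finite exceptional sets never stall the inductions.
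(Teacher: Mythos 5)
Your proof is correct, but it takes a genuinely different route from the paper's. The paper works with the \emph{contracting} translations $\rho_q\colon x\mapsto xq$ and $\lambda_p\colon x\mapsto px$ (formulas \eqref{eq-2.1} and \eqref{eq-2.2}, which move a point left along its row, resp.\ up its column, with a wrap-around on the first column, resp.\ first row), and it argues by contradiction in two \emph{independent} halves: if every row $\mathscr{C}[q^i]$ met $U(0)$ finitely, then $\rho_q$ would expel points from $U(0)$, making $U(0)\setminus V_q(0)$ infinite, where $V_q(0)=\left\{x\in U(0)\colon xq\in U(0)\right\}$, contradicting Lemma~\ref{lemma-2.2}$(2)$; symmetrically for columns via $\lambda_p$. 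You instead use the opposite, \emph{expanding} translations $\rho_p\colon x\mapsto xp$ and $\lambda_q\colon x\mapsto qx$, which have no exceptional cases in the multiplication table, and you interlock the two mechanisms to fill a whole quadrant. This buys two things. First, it proves a stronger statement --- every neighbourhood of zero contains a quadrant $\left\{q^ap^b\colon a\geq a_0,\ b\geq b_0\right\}$ --- which is already most of the way toward Lemma~\ref{lemma-2.6}. Second, your closing caution that one cannot get the infinite row and the infinite column from two independent arguments is exactly on target, and it points at a real weakness of the paper's scheme: the expulsion argument for the row half silently fails when the points of $U(0)$ accumulate in the first column $\mathscr{C}[p^0]$, since there $\rho_q$ acts by $q^i\mapsto q^{i+1}$ and expels nothing. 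Indeed, the set $\{0\}\cup\mathscr{C}[p^0]$ meets every row finitely and is closed under $\rho_q$, so for it $U(0)\setminus V_q(0)=\varnothing$; nothing invoked in that half of the paper's proof rules this configuration out. The only way to block this escape is a translation that pushes points out of the first column --- precisely your $\rho_p$ --- and dually the column half needs your $\lambda_q$. So your quadrant construction is not merely a stylistic variant: it supplies, with the uniform bounds on the exponents occurring in $F$ and $G$, the interlocking ingredient that the paper's two separate halves lack.
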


\begin{proof}
By Lemma~\ref{lemma-2.2}$(1)$, without loss of generality we may assume that $U(0)$ is a compact open neighbourhood of zero $0$ in $(\mathscr{C}^0,\tau)$. Put
\begin{equation*}
    V_q(0)=\left\{x\in U(0)\colon x\cdot q\in U(0)\right\} \qquad \hbox{and} \qquad V_p(0)=\left\{x\in U(0)\colon p\cdot x \in U(0)\right\}.
\end{equation*}
If the set $\mathscr{C}[q^i]\cap U(0)$ is finite for any non-negative integer $i$, then the formula
\begin{equation}\label{eq-2.1}
    q^ip^l\cdot q=
    \left\{
      \begin{array}{ll}
        q^{i+1},    & \hbox{if~} l=0;\\
        q^ip^{l-1}, & \hbox{if~} l \hbox{~is a positive integer},
      \end{array}
    \right.
\end{equation}
implies that the right translation $\rho_q\colon \mathscr{C}^0\rightarrow\mathscr{C}^0\colon x\mapsto x\cdot q$ shifts all non-zero elements of the neighbourhood $V_q(0)$. Then $U(0)\setminus V_q(0)$ is an infinite subset of ${\mathscr{C}}(p,q)$, which contradicts Lemma~\ref{lemma-2.2}$(2)$. Similarly, if the set $\mathscr{C}[p^j]\cap U(0)$ is finite for any non-negative integer $j$, then the formula
\begin{equation}\label{eq-2.2}
   p\cdot q^jp^l=
    \left\{
      \begin{array}{ll}
        p^{l+1},      & \hbox{if~} j=0;\\
        q^{j-1}p^{l}, & \hbox{if~} j \hbox{~is a positive integer},
      \end{array}
    \right.
\end{equation}
implies that the left translation $\lambda_p\colon \mathscr{C}^0\rightarrow\mathscr{C}^0\colon x\mapsto p\cdot x$ shifts all non-zero elements of the neighbourhood $V_p(0)$. This implies that $U(0)\setminus V_p(0)$ is an infinite subset of ${\mathscr{C}}(p,q)$, which contradicts Lemma~\ref{lemma-2.2}$(2)$.
\end{proof}

\begin{lemma}\label{lemma-2.4}
Let $(\mathscr{C}^0,\tau)$ be a locally compact non-discrete semitopological semigroup. Then there exist non-negative integers $i$ and $j$ such that $\mathscr{C}[q^i]\setminus U(0)$ and $\mathscr{C}[p^j]\setminus U(0)$ are finite for every open neighbourhood $U(0)$ of zero $0$ in $(\mathscr{C}^0,\tau)$.
\end{lemma}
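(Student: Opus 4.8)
The plan is to collapse the whole statement to a single compact open neighbourhood and then run a \emph{descent} argument along one row (respectively one column) powered by the continuity of a single translation. First I would apply Lemma~\ref{lemma-2.2}$(1)$ to the open neighbourhood $U(0)=\mathscr{C}^0$ to fix one compact open neighbourhood $U_0$ of $0$, and then invoke Lemma~\ref{lemma-2.3} to obtain non-negative integers $i$ and $j$ for which $\mathscr{C}[q^i]\cap U_0$ and $\mathscr{C}[p^j]\cap U_0$ are infinite. The reduction I would rely on is that it is enough to prove $\mathscr{C}[q^i]\setminus U_0$ and $\mathscr{C}[p^j]\setminus U_0$ are finite for this \emph{one} neighbourhood: for an arbitrary open $V(0)$ one has $\mathscr{C}[q^i]\setminus V(0)\subseteq(\mathscr{C}[q^i]\setminus U_0)\cup(U_0\setminus V(0))$, and Lemma~\ref{lemma-2.2}$(2)$ makes $U_0\setminus V(0)$ finite, so the full claim follows immediately. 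Thus the task becomes upgrading the ``infinite intersection'' of Lemma~\ref{lemma-2.3} to a ``cofinite intersection'' with $U_0$.

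For the row I would set $L=\{\,l\colon q^ip^l\in U_0\,\}$, which is infinite. Since the operation is separately continuous and $0\cdot q=0$, the right translation $\rho_q$ is continuous at $0$, so I can choose an open neighbourhood $W(0)$ of $0$ with $W(0)\cdot q\subseteq U_0$. By Lemma~\ref{lemma-2.2}$(2)$ the set $U_0\setminus W(0)$ is finite, so all but finitely many $l\in L$ satisfy $q^ip^l\in W(0)$; for each such $l\geq 1$, formula~\eqref{eq-2.1} gives $q^ip^l\cdot q=q^ip^{l-1}\in U_0$, that is $l-1\in L$. Hence there is a finite set $F$ with the property that $l\in L\setminus F$ and $l\geq 1$ force $l-1\in L$. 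A downward induction then finishes the row: given any integer $m>\max F$, infiniteness of $L$ supplies some $l\in L$ with $l>m$, and descending one index at a time (each intermediate value is produced in $L$ and exceeds $\max F$, hence lies in $L\setminus F$) forces $m\in L$. Therefore $L$ is cofinite and $\mathscr{C}[q^i]\setminus U_0$ is finite.

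The column case is the mirror image, obtained by trading the right translation $\rho_q$ for the left translation $\lambda_p$. Writing $K=\{\,k\colon q^kp^j\in U_0\,\}$, continuity of $\lambda_p$ at $0$ (using $p\cdot 0=0$) yields $W(0)$ with $p\cdot W(0)\subseteq U_0$, formula~\eqref{eq-2.2} gives $p\cdot q^kp^j=q^{k-1}p^j$ for $k\geq 1$, and the identical descent shows $K$ is cofinite, so $\mathscr{C}[p^j]\setminus U_0$ is finite. Combining the two conclusions with the reduction of the first paragraph yields the lemma.

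I expect the one genuine obstacle to be precisely this passage from ``infinite'' to ``cofinite''. Lemma~\ref{lemma-2.3} only locates a row and a column that accumulate at $0$, and a priori such a row might weave in and out of $U_0$ along a sparse, gap-ridden index set; the naive fix of translating the convergent subset by all powers of $p$ and $q$ fails, because the resulting pieces converge to $0$ only individually and not uniformly, so their union need not converge. The device that circumvents this is to translate the \emph{neighbourhood} rather than the points: the continuity of a single translation together with the finiteness furnished by Lemma~\ref{lemma-2.2}$(2)$ shows that membership in $L$ (respectively $K$) propagates one index downward for all but finitely many indices, and the infiniteness of $L$ (respectively $K$) then lets the descent reach every sufficiently large index.
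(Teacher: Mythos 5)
Your proof is correct and follows essentially the same route as the paper's: both extract the row and column from Lemma~\ref{lemma-2.3}, then use separate continuity to translate a \emph{neighbourhood} of zero (your $W(0)\cdot q\subseteq U_0$ and $p\cdot W(0)\subseteq U_0$ mirror the paper's $V(0)\cdot q\subseteq U'(0)$ and $p\cdot V(0)\subseteq U'(0)$), so that the shift formulas (\ref{eq-2.1}) and (\ref{eq-2.2}) combined with Lemma~\ref{lemma-2.2}$(2)$ force cofiniteness. The only differences are organizational: you reduce to a single compact open neighbourhood up front and finish with a direct downward induction, whereas the paper argues by contradiction for each arbitrary $U(0)$; your descent is essentially an explicit rendering of the step the paper compresses into ``$\rho_q$ shifts all non-zero elements of $V(0)$, hence $U'(0)\setminus V(0)$ is infinite.''
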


\begin{proof}
Fix an arbitrary open compact neighbourhood $U_0(0)$ of zero in $(\mathscr{C}^0,\tau)$. Then Lemma~\ref{lemma-2.3} implies that there exist non-negative integers $i$ and $j$ such that both sets $\mathscr{C}[q^i]\cap U_0(0)$ and $\mathscr{C}[p^j]\cap U_0(0)$ are infinite. Let $U(0)$ be an arbitrary open neighbourhood of zero in $(\mathscr{C}^0,\tau)$. By Lemma~\ref{lemma-2.2}$(2)$, the set $U_0(0)\setminus U(0)$ is finite. By Lemma~\ref{lemma-2.2}$(1)$, there exists an open compact neighbourhood $U'(0)\subseteq U(0)$ of zero in $(\mathscr{C}^0,\tau)$.

Now, Lemma~\ref{lemma-2.2}$(1)$ and the separate continuity of the semigroup operation in $(\mathscr{C}^0,\tau)$ imply that there exists an open compact neighbourhood $V(0)$ of zero $0$ in $(\mathscr{C}^0,\tau)$ such that
\begin{equation*}
    V(0)\subseteq U'(0), \qquad V(0)\cdot q\subseteq U'(0)\qquad \hbox{and} \qquad p\cdot V(0)\subseteq U'(0).
\end{equation*}
If the set $\mathscr{C}[q^i]\setminus U(0)$ is infinite, then formula (\ref{eq-2.1}) implies that the right translation $\rho_q\colon \mathscr{C}^0\rightarrow\mathscr{C}^0\colon x\mapsto x\cdot q$ shifts all non-zero elements of the neighbourhood $V(0)$ and hence the inclusion $V(0)\cdot q\subseteq U'(0)$ implies that $U'(0)\setminus V(0)$ is an infinite set, which contradicts Lemma~\ref{lemma-2.2}$(2)$. Hence the set $\mathscr{C}[q^i]\setminus U(0)$ is finite. Similarly, if the set $\mathscr{C}[p^j]\setminus U(0)$ is infinite, then by formula (\ref{eq-2.2}) we have that the left translation $\lambda_p\colon \mathscr{C}^0\rightarrow\mathscr{C}^0\colon x\mapsto p\cdot x$ shifts all non-zero elements of the neighbourhood $V(0)$ and hence the by inclusion $p\cdot V(0)\subseteq U'(0)$ we obtain that $U'(0)\setminus V(0)$ is an infinite set, which contradicts Lemma~\ref{lemma-2.2}$(2)$. Therefore, the set $\mathscr{C}[p^j]\setminus U(0)$ is finite as well.
\end{proof}

\begin{lemma}\label{lemma-2.5}
Let $(\mathscr{C}^0,\tau)$ be a locally compact non-discrete semitopological semigroup. Then for every open neighbourhood $U(0)$ of zero $0$ in $(\mathscr{C}^0,\tau)$ and any non-negative integer $i$ both sets $\mathscr{C}[q^i]\setminus U(0)$ and $\mathscr{C}[p^i]\setminus U(0)$ are finite.
\end{lemma}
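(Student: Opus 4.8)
The plan is to upgrade Lemma~\ref{lemma-2.4}, which supplies \emph{some} pair of indices $i$ and $j$ for which $\mathscr{C}[q^i]\setminus U(0)$ and $\mathscr{C}[p^j]\setminus U(0)$ are finite for every open neighbourhood $U(0)$ of zero, to the assertion that \emph{every} row $\mathscr{C}[q^i]$ and \emph{every} column $\mathscr{C}[p^j]$ enjoys this cofiniteness property. Fix the indices $i_0$ and $j_0$ produced by Lemma~\ref{lemma-2.4}. First I would prove by induction on $|i-i_0|$ that $\mathscr{C}[q^i]\setminus U(0)$ is finite for every open neighbourhood $U(0)$ of zero and every non-negative integer $i$; the symmetric argument, run on the columns with $j$ and $j_0$ in place of $i$ and $i_0$, then disposes of the $\mathscr{C}[p^j]$ part.

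The engine of the induction is the observation that the one-sided translations permute the rows and columns in a controlled way. Indeed, the evident identity $q\cdot q^ip^l=q^{i+1}p^l$ shows that the left translation $\lambda_q\colon x\mapsto q\cdot x$ restricts to a bijection of $\mathscr{C}[q^i]$ onto $\mathscr{C}[q^{i+1}]$, while by (\ref{eq-2.2}) the left translation $\lambda_p$ restricts, for $i>0$, to a bijection of $\mathscr{C}[q^i]$ onto $\mathscr{C}[q^{i-1}]$. Dually, the identity $q^ip^l\cdot p=q^ip^{l+1}$ shows that $\rho_p\colon x\mapsto x\cdot p$ maps $\mathscr{C}[p^l]$ bijectively onto $\mathscr{C}[p^{l+1}]$, and by (\ref{eq-2.1}) the right translation $\rho_q$ maps $\mathscr{C}[p^l]$ bijectively onto $\mathscr{C}[p^{l-1}]$ for $l>0$. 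Thus, starting from the single good row $\mathscr{C}[q^{i_0}]$, repeated application of $\lambda_q$ reaches every $\mathscr{C}[q^i]$ with $i\geq i_0$, while repeated application of $\lambda_p$ reaches every $\mathscr{C}[q^i]$ with $0\leq i<i_0$; together these cover all non-negative integers $i$.

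The inductive step transfers cofiniteness across one such translation using separate continuity together with the fact that $0$ is an absorbing element. Suppose $\mathscr{C}[q^i]\setminus W(0)$ is finite for every open neighbourhood $W(0)$ of zero, and let $U(0)$ be an arbitrary open neighbourhood of zero; by Lemma~\ref{lemma-2.2}$(1)$ I may assume that $U(0)$ is compact and open. Since $q\cdot 0=0\in U(0)$, continuity of $\lambda_q$ yields an open neighbourhood $V(0)$ of zero with $q\cdot V(0)\subseteq U(0)$. By the inductive hypothesis the set $\mathscr{C}[q^i]\setminus V(0)$ is finite, hence so is its image $\lambda_q(\mathscr{C}[q^i]\setminus V(0))$, and because $\lambda_q$ carries $\mathscr{C}[q^i]$ bijectively onto $\mathscr{C}[q^{i+1}]$ this image equals $\mathscr{C}[q^{i+1}]\setminus\lambda_q(\mathscr{C}[q^i]\cap V(0))$. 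Since $\lambda_q(\mathscr{C}[q^i]\cap V(0))\subseteq q\cdot V(0)\subseteq U(0)$, it follows that $\mathscr{C}[q^{i+1}]\setminus U(0)$ is finite. The descending step is identical with $\lambda_p$ in place of $\lambda_q$ (using $p\cdot 0=0$ and the bijection onto $\mathscr{C}[q^{i-1}]$), and the column statements are obtained verbatim upon replacing $\lambda_q,\lambda_p$ with $\rho_p,\rho_q$.

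I expect the only delicate point to be the bookkeeping that the translation image of a cofinite subset of a row is cofinite in the target row; this is where injectivity of $\lambda_q,\lambda_p,\rho_p,\rho_q$ on a fixed row or column is essential, and it is exactly here that the bicyclic relations (\ref{eq-2.1}) and (\ref{eq-2.2}) are invoked to identify the image precisely. Everything else is a routine combination of separate continuity, the compact-open neighbourhood base furnished by Lemma~\ref{lemma-2.2}, and the two-directional induction seeded by Lemma~\ref{lemma-2.4}.
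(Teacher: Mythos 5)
Your proof is correct and follows essentially the same route as the paper's: both transfer the cofiniteness of the single row $\mathscr{C}[q^{i_0}]$ supplied by Lemma~\ref{lemma-2.4} to an arbitrary row via left translations that map rows bijectively onto rows, with separate continuity providing a neighbourhood $V(0)$ whose translate lands inside $U(0)$. The only difference is organizational: the paper jumps directly from $i_0$ to $i$ by translating by the single element $p^{i_0-i}$ (or $q^{i-i_0}$), whereas you iterate one-step translations by $p$ or $q$ in an induction on $|i-i_0|$ --- the same argument applied $|i-i_0|$ times.
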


\begin{proof}
By Lemma~\ref{lemma-2.2}$(1)$, without loss of generality we may assume that the open neighbourhood $U(0)$ is compact.
By Lemma~\ref{lemma-2.4} there exists a non-negative integer $i_0$ such that $\mathscr{C}[q^{i_0}]\setminus U'(0)$ is finite for any open compact neighbourhood $U'(0)$ of zero $0$ in $(\mathscr{C}^0,\tau)$.

Fix an arbitrary non-negative integer $i\neq i_0$.
If $i<i_0$, then the separate continuity of the semigroup operation in $(\mathscr{C}^0,\tau)$ implies that there exists an open compact neighbourhood $V(0)\subseteq U(0)$ of zero $0$ in $(\mathscr{C}^0,\tau)$ such that
$p^{i_0-i}\cdot V(0)\subseteq U(0)$. Then
\begin{equation}\label{eq-2.3}
    p^{i_0-i}\cdot q^{i_0}p^l=q^{i}p^l, \qquad \hbox{for any non-negative integer~} l.
\end{equation}
The set $\mathscr{C}[q^{i_0}]\setminus V(0)$ is finite, and hence by (\ref{eq-2.3}) the set $\mathscr{C}[q^{i}]\setminus U(0)\subseteq \mathscr{C}[q^{i}]\setminus \left(p^{i_0-i}\cdot V(0)\right)$ is finite as well.

If $i>i_0$, then the separate continuity of the semigroup operation in $(\mathscr{C}^0,\tau)$ implies that there exists an open compact neighbourhood $W(0)\subseteq U(0)$ of zero $0$ in $(\mathscr{C}^0,\tau)$ such that
$q^{i-i_0}\cdot W(0)\subseteq U(0)$. Then
\begin{equation}\label{eq-2.4}
    q^{i-i_0}\cdot q^{i_0}p^l=q^{i}p^l, \qquad \hbox{for any non-negative integer~} l,
\end{equation}
The set $\mathscr{C}[q^{i_0}]\setminus W(0)$ is finite, and hence (\ref{eq-2.4}) implies that the set $\mathscr{C}[q^{i}]\setminus U(0)\subseteq \mathscr{C}[q^{i}]\setminus \left(q^{i-i_0}\cdot W(0)\right)$ is finite as well.

The proof of finiteness of the set $\mathscr{C}[p^i]\setminus U(0)$ is similar.
\end{proof}

\begin{lemma}\label{lemma-2.6}
Let $(\mathscr{C}^0,\tau)$ be a non-discrete locally compact semitopological semigroup. Then for every open neighbourhood $U(0)$ of zero $0$ in $(\mathscr{C}^0,\tau)$ the set $\mathscr{C}^0\setminus U(0)$ is finite.
\end{lemma}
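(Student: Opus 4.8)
The plan is to argue by contradiction, reducing first to the case of a compact open neighbourhood and then exploiting the interaction between the right translation $\rho_q$ and the row structure of $\mathscr{C}(p,q)$. Since $0\in U(0)$, the set $\mathscr{C}^0\setminus U(0)$ coincides with $F:=\mathscr{C}(p,q)\setminus U(0)$, so I would assume that $F$ is infinite and seek a contradiction. By Lemma~\ref{lemma-2.2}$(1)$ it suffices to treat the case when $U(0)$ is compact and open: any compact open neighbourhood contained in a given one has a larger complement, so finiteness of the complement of the smaller set yields finiteness for the larger.

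The key step is to control the ``boundary'' set $\mathcal{E}:=\rho_q^{-1}(F)\setminus F=\{x\in U(0)\colon x\cdot q\notin U(0)\}$. Since the operation is separately continuous, the translation $\rho_q\colon x\mapsto x\cdot q$ is continuous and $0\cdot q=0\in U(0)$, so $\rho_q^{-1}(U(0))$ is an open neighbourhood of $0$; applying Lemma~\ref{lemma-2.2}$(2)$ to the compact open $U(0)$ and this neighbourhood shows that $\mathcal{E}=U(0)\setminus\rho_q^{-1}(U(0))$ is finite. This is the place where local compactness is really used.

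Now I would propagate along rows using formula (\ref{eq-2.1}). For $y=q^ap^b\in F$ the element $q^ap^{b+1}$ satisfies $q^ap^{b+1}\cdot q=y$, so it lies in $\rho_q^{-1}(F)$; hence $q^ap^{b+1}\in F$ unless $q^ap^{b+1}\in\mathcal{E}$. By Lemma~\ref{lemma-2.5} each row $\mathscr{C}[q^a]$ meets $F$ in a finite set, so whenever $F$ meets $\mathscr{C}[q^a]$ I may take $b^{*}$ maximal with $q^ap^{b^{*}}\in F$; then $q^ap^{b^{*}+1}\notin F$ while $q^ap^{b^{*}+1}\cdot q=q^ap^{b^{*}}\in F$, so $q^ap^{b^{*}+1}\in\mathcal{E}$. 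Distinct rows produce witnesses lying in distinct rows, hence distinct, whence the number of rows meeting $F$ is at most $|\mathcal{E}|<\infty$. Since $F$ is then a finite union of the finite sets $\mathscr{C}[q^a]\cap F$, it is finite, contradicting the assumption.

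The conceptual obstacle I expect is to recognize that the plain translations preserve the partition into rows and columns and therefore cannot by themselves ``collapse'' a diagonal-like complement into a single row; this is why a direct attempt to push $F$ into one row fails. The resolution is precisely that a complement meeting every row finitely but infinitely many rows (for instance the set of idempotents $\{q^ip^i\}$) would force $\mathcal{E}$ to be infinite, contradicting Lemma~\ref{lemma-2.2}$(2)$. Identifying $\mathcal{E}$ as the correct finite invariant, rather than trying to map $F$ wholesale into a single row, is the crux of the argument.
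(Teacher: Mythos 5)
Your proof is correct and takes essentially the same approach as the paper: argue by contradiction from Lemma~\ref{lemma-2.2}(2) and the finiteness given by Lemma~\ref{lemma-2.5}, using a translation of $\mathscr{C}^0$ to manufacture infinitely many points in a set that those lemmas force to be finite. The only difference is the mirror-image choice of translation and bookkeeping --- the paper uses $\lambda_p$ with a neighbourhood $V(0)$ satisfying $p\cdot V(0)\subseteq U(0)$ and counts witnesses along columns, whereas you use $\rho_q$ with the neighbourhood $\rho_q^{-1}(U(0))$ and count one witness per row via the maximal element of each row outside $U(0)$ --- so your write-up is in effect a fully detailed rendering of the final step the paper states tersely.
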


\begin{proof}
Suppose to the contrary that there exists an open neighbourhood $U(0)$ of zero $0$ in $(\mathscr{C}^0,\tau)$ such that $\mathscr{C}^0\setminus U(0)$ is infinite. Lemma~\ref{lemma-2.2}$(1)$ implies that without loss of generality we may assume that the neighbourhood $U(0)$ is compact.

Now, the separate continuity of the semigroup operation in $(\mathscr{C}^0,\tau)$ implies that there exists an open neighbourhood $V(0)\subseteq U(0)$ of zero $0$ in $(\mathscr{C}^0,\tau)$ such that $p\cdot V(0)\subseteq U(0)$. By Lemma~\ref{lemma-2.5} for every non-negative integer $n$ both sets $\mathscr{C}[q^n]\setminus U(0)$ and $\mathscr{C}[p^n]\setminus U(0)$ are finite. Thus, the following conditions hold:
\begin{itemize}
  \item[$(i)$] $U(0)\cup\bigcup_{n=0}^m\left(\mathscr{C}[q^n]\cup \mathscr{C}[p^n]\right)\neq\mathscr{C}^0$ for every positive integer $m$;
  \item[$(ii)$] for every positive integer $k$ there exists a non-negative integer $k_{\texttt{max}}$ such that $\left\{q^kp^j\colon j\geqslant k_{\texttt{max}}\right\}\subset U(0)$.
\end{itemize}
We have $p\cdot q^kp^l=q^{k-1}p^k$ for any integers $k\geqslant 1$ and $l$. This and conditions $(i)$ and $(ii)$ imply that the set $U(0)\setminus V(0)$ is infinite, which contradicts Lemma~\ref{lemma-2.2}$(2)$. The obtained contradiction implies the statement of the lemma.
\end{proof}

The following simple example shows that on the semigroup $\mathscr{C}^0$ there exists a topology $\tau_{\operatorname{\textsf{Ac}}}$ such that $(\mathscr{C}^0,\tau_{\operatorname{\textsf{Ac}}})$ is a compact semitopological semigroup.

\begin{example}\label{example-2.7}
On the semigroup $\mathscr{C}^0$ we define a topology $\tau_{\operatorname{\textsf{Ac}}}$ in the following way:
\begin{itemize}
  \item[$(i)$] every element of the bicyclic monoid ${\mathscr{C}}(p,q)$ is an isolated point in the space $(\mathscr{C}^0,\tau_{\operatorname{\textsf{Ac}}})$;
  \item[$(ii)$] the family $\mathscr{B}(0)=\left\{U\subseteq \mathscr{C}^0\colon U\ni 0 \hbox{~and~} {\mathscr{C}}(p,q)\setminus U \hbox{~is finite}\right\}$ determines a base of the topology $\tau_{\operatorname{\textsf{Ac}}}$ at zero $0\in\mathscr{C}^0$,
\end{itemize}
i.e., $\tau_{\operatorname{\textsf{Ac}}}$ is the topology of the Alexandroff one-point compactification of the discrete space ${\mathscr{C}}(p,q)$ with the remainder $\{0\}$. The semigroup operation in $(\mathscr{C}^0,\tau_{\operatorname{\textsf{Ac}}})$ is separately continuous, because all elements of the bicyclic semigroup ${\mathscr{C}}(p,q)$ are isolated points in the space $(\mathscr{C}^0,\tau_{\operatorname{\textsf{Ac}}})$.
\end{example}

\begin{remark}\label{remark-2.8}
In \cite{Bertman-West-1976} Bertman and  West showed that the discrete topology $\tau_{\textsf{d}}$ is a unique topology on the bicyclic monoid ${\mathscr{C}}(p,q)$ such that ${\mathscr{C}}(p,q)$ is a semitopological semigroup. So $\tau_{\operatorname{\textsf{Ac}}}$ is the unique compact topology on $\mathscr{C}^0$ such that $(\mathscr{C}^0,\tau_{\operatorname{\textsf{Ac}}})$ is a compact semitopological semigroup.
\end{remark}

Lemma~\ref{lemma-2.6} and Remark~\ref{remark-2.8} imply the following dichotomy for a locally compact semitopological semigroup $\mathscr{C}^0$.

\begin{theorem}\label{theorem-2.9}
If $\mathscr{C}^0$ is a Hausdorff locally compact semitopological semigroup, then either $\mathscr{C}^0$ is discrete or
$\mathscr{C}^0$ is topologically isomorphic to $(\mathscr{C}^0,\tau_{\operatorname{\textsf{Ac}}})$.
\end{theorem}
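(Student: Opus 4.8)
The plan is to dispose of the discrete alternative immediately and then to show that every \emph{non-discrete} locally compact semitopological topology $\tau$ on $\mathscr{C}^0$ must coincide with $\tau_{\operatorname{\textsf{Ac}}}$. So suppose $(\mathscr{C}^0,\tau)$ is not discrete. Since $\mathscr{C}(p,q)$ is a subsemigroup of $\mathscr{C}^0$, separate continuity of the operation restricts to it, so $(\mathscr{C}(p,q),\tau|_{\mathscr{C}(p,q)})$ is itself a semitopological semigroup; by the Bertman--West theorem quoted in Remark~\ref{remark-2.8} its topology is discrete. Using Hausdorffness to separate a given non-zero point from $0$, I would conclude that every element of $\mathscr{C}(p,q)$ is isolated in $(\mathscr{C}^0,\tau)$, so $0$ is the only possible non-isolated point.

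Next I would invoke Lemma~\ref{lemma-2.6} to establish compactness of $(\mathscr{C}^0,\tau)$. Given an arbitrary open cover $\mathscr{V}$ of $\mathscr{C}^0$, choose a member $V_0\in\mathscr{V}$ containing $0$. By Lemma~\ref{lemma-2.6} the complement $\mathscr{C}^0\setminus V_0$ is finite, hence covered by finitely many members of $\mathscr{V}$; together with $V_0$ these form a finite subcover. Thus $(\mathscr{C}^0,\tau)$ is a compact semitopological semigroup.

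Finally, by the uniqueness statement in Remark~\ref{remark-2.8}, $\tau_{\operatorname{\textsf{Ac}}}$ is the only compact topology making $\mathscr{C}^0$ a compact semitopological semigroup, whence $\tau=\tau_{\operatorname{\textsf{Ac}}}$ and the identity map is a topological isomorphism between $(\mathscr{C}^0,\tau)$ and $(\mathscr{C}^0,\tau_{\operatorname{\textsf{Ac}}})$. I expect no genuine obstacle here, since essentially all of the work is already contained in Lemmas~\ref{lemma-2.2}--\ref{lemma-2.6}; the only points requiring care are the deduction that the non-zero points are isolated and the passage from Lemma~\ref{lemma-2.6} to compactness. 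Alternatively, one could avoid the explicit appeal to compactness and uniqueness by arguing directly that the isolated non-zero points together with the cofinite neighbourhoods of $0$ supplied by Lemma~\ref{lemma-2.6} force the $\tau$-open sets to be exactly those described in Example~\ref{example-2.7}.
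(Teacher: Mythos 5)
Your proposal is correct and follows essentially the same route as the paper, which derives Theorem~\ref{theorem-2.9} directly from Lemma~\ref{lemma-2.6} (cofinite neighbourhoods of zero, hence compactness in the non-discrete case) together with the uniqueness statement of Remark~\ref{remark-2.8}. Your write-up merely makes explicit the details the paper leaves implicit (isolatedness of the non-zero points via Bertman--West and Hausdorffness, and the finite-subcover argument), so there is nothing to correct.
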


Since the bicyclic monoid $\mathscr{C}(p,q)$ does not embeds into any Hausdorff compact topological semigroup \cite{Anderson-Hunter-Koch-1965}, Theorem~\ref{theorem-2.9} implies the following corollary.

\begin{corollary}\label{corollary-2.10}
If $\mathscr{C}^0$ is a Hausdorff locally compact topological semigroup, then $\mathscr{C}^0$ is discrete.
\end{corollary}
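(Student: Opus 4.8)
The plan is to obtain the corollary as an immediate consequence of the dichotomy in Theorem~\ref{theorem-2.9}, using the Anderson--Hunter--Koch embedding obstruction to eliminate the compact branch. First I would note that every (jointly continuous) topological semigroup is in particular a (separately continuous) semitopological semigroup, so a Hausdorff locally compact topological semigroup $\mathscr{C}^0$ automatically satisfies the hypotheses of Theorem~\ref{theorem-2.9}. Applying that theorem yields the dichotomy: either $\mathscr{C}^0$ is discrete, or $\mathscr{C}^0$ is topologically isomorphic to $(\mathscr{C}^0,\tau_{\operatorname{\textsf{Ac}}})$.

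It then remains to exclude the second alternative under the stronger, topological-semigroup hypothesis. Suppose $\mathscr{C}^0$ were topologically isomorphic to $(\mathscr{C}^0,\tau_{\operatorname{\textsf{Ac}}})$. By Example~\ref{example-2.7} the latter is the Alexandroff one-point compactification of the discrete space $\mathscr{C}(p,q)$ and hence compact; moreover a topological isomorphism transports joint continuity, so $(\mathscr{C}^0,\tau_{\operatorname{\textsf{Ac}}})$ would itself be a Hausdorff compact topological semigroup. Since it contains the bicyclic monoid $\mathscr{C}(p,q)$ as a subsemigroup, this contradicts the theorem of Anderson, Hunter and Koch~\cite{Anderson-Hunter-Koch-1965}, which asserts that $\mathscr{C}(p,q)$ does not embed into any Hausdorff compact topological semigroup. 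Therefore the compact branch cannot occur, and $\mathscr{C}^0$ must be discrete.

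I do not expect any genuine obstacle here: all the substantive work has already been carried out in Theorem~\ref{theorem-2.9} and in the cited embedding result. The only point requiring a moment's attention is the conceptual observation that the corollary's hypothesis is strictly stronger than the theorem's, and that this extra joint continuity is precisely what upgrades $(\mathscr{C}^0,\tau_{\operatorname{\textsf{Ac}}})$ from a merely \emph{semitopological} semigroup to a \emph{topological} one, thereby triggering the Anderson--Hunter--Koch contradiction that is unavailable at the semitopological level.
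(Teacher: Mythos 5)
Your proposal is correct and follows exactly the paper's own route: the paper derives the corollary in one line by combining the dichotomy of Theorem~\ref{theorem-2.9} with the Anderson--Hunter--Koch result that $\mathscr{C}(p,q)$ does not embed into any Hausdorff compact topological semigroup, which is precisely your argument for excluding the compact alternative. Your write-up merely makes explicit the transport of joint continuity through the topological isomorphism, a detail the paper leaves implicit.
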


The following example shows that a counterpart of the statement of Corollary~\ref{corollary-2.10} does not hold when $\mathscr{C}^0$ is a \v{C}ech-complete metrizable topological inverse semigroup.

\begin{example}\label{example-2.11}
On the semigroup $\mathscr{C}^0$ we define a topology $\tau_{\operatorname{\textsf{1}}}$ in the following way:
\begin{itemize}
  \item[$(i)$] every element of the bicyclic monoid ${\mathscr{C}}(p,q)$ is an isolated point in the space $(\mathscr{C}^0,\tau_{\operatorname{\textsf{1}}})$;
  \item[$(ii)$] the family $\mathscr{B}(0)=\left\{U_n \colon n=0,1,2,3,\ldots\right\}$, where
   \begin{equation*}
   U_n=\{0\}\cup\left\{q^ip^j\in\mathscr{C}(p,q)\colon i,j>n\right\},
\end{equation*}
determines a base of the topology $\tau_{\operatorname{\textsf{1}}}$ at zero $0\in\mathscr{C}^0$.
\end{itemize}
It is obvious that $(\mathscr{C}^0,\tau_{\operatorname{\textsf{1}}})$ is first countable space and the arguments presented in \cite[p.~68]{Gutik-1996} show that $(\mathscr{C}^0,\tau_{\operatorname{\textsf{1}}})$ is a Hausdorff topological inverse semigroup.

First we observe that each element of the family $\mathscr{B}(0)$ is an open closed subset of $(\mathscr{C}^0,\tau_{\operatorname{\textsf{1}}})$, and hence the space $(\mathscr{C}^0,\tau_{\operatorname{\textsf{1}}})$ is regular. Since the set $\mathscr{C}^0$ is countable, the definition of the topology $\tau_{\operatorname{\textsf{1}}}$ implies that $(\mathscr{C}^0,\tau_{\operatorname{\textsf{1}}})$ is second countable, and hence by Theorem~4.2.9 from \cite{Engelking-1989} the space $(\mathscr{C}^0,\tau_{\operatorname{\textsf{1}}})$ is metrizable. Also, it is obvious that the space $(\mathscr{C}^0,\tau_{\operatorname{\textsf{1}}})$ is \v{C}ech-complete, as a union two \v{C}ech-complete spaces: that are the discrete space $\mathscr{C}(p,q)$ and the singleton space $\{0\}$.
\end{example}

%%%%%%%%%%%%%%%%%%%%%%%%%%%%%%%%%%%%%%%%%%%%%%%%%%%%%%%%%%%%

\section{On a locally compact semitopological bicyclic semigroup with an adjoined compact ideal}

Later we need the following notions. A continuous map $f\colon X\to Y$ from a topological space $X$ into a topological space $Y$ is called:
\begin{itemize}
  \item[$\bullet$] \emph{quotient} if the set $f^{-1}(U)$ is open in $X$ if and only if $U$ is open in $Y$ (see \cite{Moore-1925} and \cite[Section~2.4]{Engelking-1989});
  \item[$\bullet$] \emph{hereditarily quotient} or \emph{pseudoopen} if for every $B\subset Y$ the restriction $f|_{B}\colon f^{-1}(B)\\ \rightarrow B$ of $f$ is a quotient map (see \cite{McDougle-1958, McDougle-1959, Arkhangelskii-1963} and \cite[Section~2.4]{Engelking-1989});
  \item[$\bullet$] \emph{closed} if $f(F)$ is closed in $Y$ for every closed subset $F$ in $X$;
  \item[$\bullet$] \emph{perfect} if $X$ is Hausdorff, $f$ is a closed map and all fibers $f^{-1}(y)$ are compact subsets of $X$ \cite{Vainstein-1947}.
\end{itemize}
Every closed map and every hereditarily quotient map are quotient \cite{Engelking-1989}. Moreover, a continuous map $f\colon X\to Y$ from a topological space $X$ onto a topological space $Y$ is hereditarily quotient if and only if for every $y\in Y$ and every open subset $U$ in $X$ which contains $f^{-1}(y)$ we have that $y\in\operatorname{int}_Y(f(U))$ (see \cite[2.4.F]{Engelking-1989}).

Later we need the following trivial lemma, which follows from separate continuity of the semigroup operation in semitopological semigroups.

\begin{lemma}\label{lemma-3.1}
Let $S$ be a Hausdorff semitopological semigroup and $I$ be a compact ideal in $S$. Then the Rees-quotient semigroup $S/I$ with the quotient topology is a Hausdorff semitopological semigroup.
\end{lemma}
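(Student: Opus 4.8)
The plan is to work with the natural Rees quotient map $\pi\colon S\to S/I$, which is the identity on $S\setminus I$ and collapses the whole ideal $I$ to a single point, namely the zero $0$ of $S/I$; the space $S/I$ carries the quotient topology, so a subset $U\subseteq S/I$ is open precisely when $\pi^{-1}(U)$ is open in $S$. Since $I$ is compact and $S$ is Hausdorff, $I$ is closed in $S$, hence $S\setminus I$ is open and $\pi$ restricts to a homeomorphism of $S\setminus I$ onto an open subspace of $S/I$. Two distinct points of $S/I$ that are both different from $0$ are the images of two distinct points $a,b\in S\setminus I$; choosing disjoint open sets $U\ni a$ and $V\ni b$ with $U,V\subseteq S\setminus I$ (possible because $S\setminus I$ is an open Hausdorff subspace) and noting that $\pi^{-1}(\pi(U))=U$ and $\pi^{-1}(\pi(V))=V$, one obtains disjoint open neighbourhoods $\pi(U)$ and $\pi(V)$ in $S/I$.

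The crux of the Hausdorff verification --- and the only place where compactness of $I$ enters --- is the separation of $0$ from a point $\pi(a)$ with $a\in S\setminus I$. Here I would invoke the standard fact that in a Hausdorff space a compact set and a point outside it possess disjoint open neighbourhoods: this yields an open set $W\supseteq I$ and an open set $V\ni a$ with $V\cap W=\varnothing$ and $V\subseteq S\setminus I$. Since $I\subseteq W$ we have $\pi^{-1}(\pi(W))=W$, so $\pi(W)$ is an open neighbourhood of $0$, while $\pi(V)$ is an open neighbourhood of $\pi(a)$, and the two are disjoint. Together with the previous paragraph this establishes that $S/I$ is Hausdorff.

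For separate continuity I would argue through the universal property of the quotient map. Fix $[a]\in S/I$ and consider the left translation $\lambda_{[a]}\colon S/I\to S/I$, $[x]\mapsto [a]\cdot[x]$; it is well defined because $I$ is an ideal (if $x\in I$, then $ax\in I$, so the value is $0$ on the collapsed fibre). One checks the identity $\lambda_{[a]}\circ\pi=\pi\circ\lambda_a$, where $\lambda_a\colon S\to S$ is the left translation in $S$, which is continuous by the separate continuity of the operation in $S$. Hence $\lambda_{[a]}\circ\pi$ is continuous, and since $\pi$ is a quotient map, $\lambda_{[a]}$ itself is continuous. The argument for right translations is symmetric, so the multiplication in $S/I$ is separately continuous.

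I expect the main obstacle to be the Hausdorffness, concretely the separation of the collapsed point $0$ from the remaining points: this is exactly the step that requires $I$ to be compact rather than merely closed, and without compactness the quotient could fail to be Hausdorff. Once this separation is in hand, the rest of the Hausdorff verification is formal, and the separate continuity is an immediate consequence of the universal property of $\pi$ together with the separate continuity already available in $S$.
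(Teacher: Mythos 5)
Your proposal is correct, and it is exactly the argument the paper leaves implicit: the paper states Lemma 3.1 without proof, calling it a trivial consequence of separate continuity, so there is no competing written proof to compare against. Your write-up supplies the two standard ingredients in the intended way --- Hausdorffness of $S/I$ via separating the compact ideal $I$ from an outside point by disjoint open sets (the saturated set $W\supseteq I$ giving $\pi^{-1}(\pi(W))=W$), and separate continuity of the quotient multiplication via the identity $\lambda_{[a]}\circ\pi=\pi\circ\lambda_a$ together with the universal property of the quotient map --- and you correctly identify that compactness of $I$ is needed only for the separation of the collapsed point $0$.
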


\begin{theorem}\label{theorem-3.2}
Let $(\mathscr{C}_I,\tau)$ be a Hausdorff locally compact semitopological semigroup, $\mathscr{C}_I=\mathscr{C}(p,q)\sqcup I$ and $I$ is a compact ideal of $\mathscr{C}_I$. Then either $(\mathscr{C}_I,\tau)$ is a compact semitopological semigroup or the ideal $I$ open.
\end{theorem}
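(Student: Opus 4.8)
The plan is to pass to the Rees quotient $\mathscr{C}_I/I$ and reduce everything to the dichotomy already established in Theorem~\ref{theorem-2.9}. First I would note that, since $(\mathscr{C}_I,\tau)$ is Hausdorff and $I$ is compact, the ideal $I$ is closed in $\mathscr{C}_I$. As a semigroup the Rees quotient $\mathscr{C}_I/I$ collapses $I$ to the adjoined zero while leaving the bicyclic subsemigroup $\mathscr{C}(p,q)$ untouched, so $\mathscr{C}_I/I$ is algebraically isomorphic to $\mathscr{C}^0$. By Lemma~\ref{lemma-3.1} the quotient space $(\mathscr{C}_I/I,\tau_q)$, where $\tau_q$ is the quotient topology, is a Hausdorff semitopological semigroup; I write $\pi\colon\mathscr{C}_I\to\mathscr{C}_I/I$ for the natural quotient map.

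The heart of the argument is to show that $\pi$ is a \emph{perfect} map. Its fibers are compact: over the zero of $\mathscr{C}_I/I$ the fiber is $I$, and over every other point it is a singleton. To prove that $\pi$ is closed I would compute the saturation of an arbitrary closed set $F\subseteq\mathscr{C}_I$. Since the only nontrivial fiber is $I$, the saturation $\pi^{-1}(\pi(F))$ equals $F$ when $F\cap I=\varnothing$ and equals $F\cup I$ otherwise; in both cases it is closed, because $F$ and $I$ are closed. Hence $\pi(F)$ is closed in the quotient topology, so $\pi$ is closed, and together with compactness of all fibers this makes $\pi$ perfect.

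Next I would use perfectness to transport local compactness to the quotient. Given $y\in\mathscr{C}_I/I$, its fiber $\pi^{-1}(y)$ is compact, so local compactness of $\mathscr{C}_I$ yields an open set $W\supseteq\pi^{-1}(y)$ with $\operatorname{cl}_{\mathscr{C}_I}(W)$ compact. Because $\pi$ is closed, the set $V=(\mathscr{C}_I/I)\setminus\pi(\mathscr{C}_I\setminus W)$ is an open neighbourhood of $y$ with $\pi^{-1}(V)\subseteq W$, whence $V\subseteq\pi(\operatorname{cl}_{\mathscr{C}_I}(W))$, a compact and therefore closed set; consequently $\operatorname{cl}(V)$ is compact. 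Thus $(\mathscr{C}_I/I,\tau_q)$ is a Hausdorff locally compact semitopological semigroup isomorphic to $\mathscr{C}^0$, and Theorem~\ref{theorem-2.9} applies: either it is discrete, or it is topologically isomorphic to the compact semigroup $(\mathscr{C}^0,\tau_{\operatorname{\textsf{Ac}}})$.

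Finally I would read off the two alternatives through $\pi$. If $\mathscr{C}_I/I$ is discrete, then the singleton consisting of its zero is open, so its $\pi$-preimage $I$ is open in $\mathscr{C}_I$, giving the second conclusion. If $\mathscr{C}_I/I$ is compact, then, since a perfect map pulls compact sets back to compact sets, $\mathscr{C}_I=\pi^{-1}(\mathscr{C}_I/I)$ is compact, giving the first conclusion. The main obstacle I anticipate is the verification that $\pi$ is perfect together with the clean transfer of local compactness to the quotient; once these are in place, the result follows from the dichotomy of Theorem~\ref{theorem-2.9} and elementary properties of perfect maps.
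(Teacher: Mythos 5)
Your proof is correct, and while it follows the paper's overall strategy---pass to the Rees quotient $\mathscr{C}_I/I\cong\mathscr{C}^0$ via Lemma~\ref{lemma-3.1}, transfer Hausdorff local compactness to the quotient, and apply the dichotomy of Theorem~\ref{theorem-2.9}---the technical core is genuinely different. The paper proves that $\pi$ is \emph{hereditarily quotient}: it exploits the fact that the points of $\mathscr{C}(p,q)$ are isolated, so for any open $U(I)\supseteq I$ the complement $\mathscr{C}_I\setminus U(I)$ is clopen and maps injectively onto a clopen set, whence $\pi(U(I))$ is open; local compactness of the quotient is then obtained by citing Din' N'e T'ong's theorem, and compactness of $\mathscr{C}_I$ in the non-open-ideal case is established by a separate explicit finite-subcover argument. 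You instead prove that $\pi$ is \emph{perfect}: since $I$ is compact, hence closed, the saturation of any closed set $F$ is either $F$ or $F\cup I$, so $\pi$ is a closed map with compact fibers. This buys you two things: a self-contained, elementary transfer of local compactness (your tube-type argument with $V=(\mathscr{C}_I/I)\setminus\pi(\mathscr{C}_I\setminus W)$ is correct, using Hausdorffness of the quotient from Lemma~\ref{lemma-3.1} so that compact sets are closed), replacing the citation of Din' N'e T'ong's theorem; and a one-line endgame, since preimages of compact sets under perfect maps are compact. Moreover, your perfectness argument nowhere uses discreteness of $\mathscr{C}(p,q)$, so that part of your proof works verbatim for any Hausdorff locally compact semitopological semigroup with a compact ideal, whereas the paper's hereditarily-quotient argument is tied to the isolated points; it is also worth noting that the paper defines perfect maps in its Section~3 preliminaries but never actually uses them, so your route is arguably the one those preliminaries anticipate. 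The only steps you gloss over are standard: the open set $W\supseteq\pi^{-1}(y)$ with compact closure is obtained by covering the compact fiber by finitely many neighbourhoods with compact closures, and the identity $V=\pi(\pi^{-1}(V))\subseteq\pi(\operatorname{cl}_{\mathscr{C}_I}(W))$ uses surjectivity of $\pi$.
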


\begin{proof}
Suppose that $I$ is not open. By Lemma~\ref{lemma-3.1} the Rees-quotient semigroup $\mathscr{C}_I/I$ with the quotient topology $\tau_{\operatorname{\textsf{q}}}$ is a semitopological semigroup. Let $\pi\colon \mathscr{C}_I\to \mathscr{C}_I/I$ be the natural homomorphism which is a quotient map. It is obvious that the Rees-quotient semigroup $\mathscr{C}_I/I$ is isomorphic to the semigroup $\mathscr{C}^0$ and the image $\pi(I)$ is zero of $\mathscr{C}^0$. Now we shall show that the natural homomorphism $\pi\colon \mathscr{C}_I\to \mathscr{C}_I/I$ is a hereditarily quotient map. Since $\pi(\mathscr{C}(p,q))$ is a discrete subspace of $(\mathscr{C}_I/I,\tau_{\operatorname{\textsf{q}}})$, it is sufficient to show that for every open neighbourhood $U(I)$ of the ideal $I$ in the space $(\mathscr{C}_I,\tau)$ we have that the image  $\pi(U(I))$ is an open neighbourhood of the zero $0$ in the space $(\mathscr{C}_I/I,\tau_{\operatorname{\textsf{q}}})$. Indeed, $\mathscr{C}_I\setminus U(I)$ is a closed-and-open subset of $(\mathscr{C}_I,\tau)$, because the elements of the bicyclic monoid $\mathscr{C}(p,q)$ are isolated point of $(\mathscr{C}_I,\tau)$. Also, since the restriction $\pi|_{\mathscr{C}(p,q)}\colon \mathscr{C}(p,q)\to \pi(\mathscr{C}(p,q))$ of the natural homomorphism $\pi\colon \mathscr{C}_I\to \mathscr{C}_I/I$ is one-to-one,  $\pi(\mathscr{C}_I\setminus U(I))$ is a closed-and-open subset of $(\mathscr{C}_I/I,\tau_{\operatorname{\textsf{q}}})$. So $\pi(U(I))$ is an open neighbourhood of the zero $0$ of the semigroup $(\mathscr{C}_I/I,\tau_{\operatorname{\textsf{q}}})$, and hence the natural homomorphism $\pi\colon \mathscr{C}_I\to \mathscr{C}_I/I$ is a hereditarily quotient map. Since $I$ is a compact ideal of the semitopological semigroup $(\mathscr{C}_I,\tau)$, $\pi^{-1}(y)$ is a compact subset of $(\mathscr{C}_I,\tau)$ for every $y\in \mathscr{C}_I/I$. By Din' N'e T'ong's Theorem (see \cite{Din'-N'e-T'ong-1963} or \cite[3.7.E]{Engelking-1989}), $(\mathscr{C}_I/I,\tau_{\operatorname{\textsf{q}}})$ is a Hausdorff locally compact space. If $I$ is not open then by Theorem~\ref{theorem-2.9} the semitopological semigroup $(\mathscr{C}_I/I,\tau_{\operatorname{\textsf{q}}})$ is topologically isomorphic to $(\mathscr{C}^0,\tau_{\operatorname{\textsf{Ac}}})$ and hence it is compact.
Next we shall prove that the space $(\mathscr{C}_I,\tau)$ is compact. Let $\mathscr{U}=\left\{U_\alpha\colon\alpha\in\mathscr{I}\right\}$ be an arbitrary open cover of $(\mathscr{C}_I,\tau)$. Since $I$ is compact, there exist $U_{\alpha_1},\ldots,U_{\alpha_n}\in\mathscr{U}$ such that $I\subseteq U_{\alpha_1}\cup\cdots\cup U_{\alpha_n}$. Put $U=U_{\alpha_1}\cup\cdots\cup U_{\alpha_n}$. Then $\mathscr{C}_I\setminus U$ is a closed-and-open subset of $(\mathscr{C}_I,\tau)$. Also, since the restriction $\pi|_{\mathscr{C}(p,q)}\colon \mathscr{C}(p,q)\to \pi(\mathscr{C}(p,q))$ of the natural homomorphism $\pi\colon \mathscr{C}_I\to \mathscr{C}_I/I$ is one-to-one, $\pi(\mathscr{C}_I\setminus U(I))$ is a closed-and-open subset of $(\mathscr{C}_I/I,\tau_{\operatorname{\textsf{q}}})$, and hence the image $\pi(\mathscr{C}_I\setminus U(I))$ is finite, because the semigroup $(\mathscr{C}_I/I,\tau_{\operatorname{\textsf{q}}})$ is compact. Thus, the set $\mathscr{C}_I\setminus U$ is finite and hence the space $(\mathscr{C}_I,\tau)$ is compact as well.
\end{proof}

\begin{corollary}\label{corollary-3.3}
If $(\mathscr{C}_I,\tau)$ is a locally compact topological semigroup, $\mathscr{C}_I=\mathscr{C}(p,q)\sqcup I$ and $I$ is a compact ideal of $\mathscr{C}_I$, then the ideal $I$ is open.
\end{corollary}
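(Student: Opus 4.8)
The plan is to deduce the corollary from Theorem~\ref{theorem-3.2} by ruling out the compact alternative, exactly in the spirit of how Corollary~\ref{corollary-2.10} was obtained from Theorem~\ref{theorem-2.9}. First I would observe that every topological semigroup is in particular a semitopological semigroup, since joint continuity of the multiplication implies its separate continuity. Hence $(\mathscr{C}_I,\tau)$ satisfies all the hypotheses of Theorem~\ref{theorem-3.2}, and we obtain the dichotomy: either $(\mathscr{C}_I,\tau)$ is compact, or the ideal $I$ is open. It therefore suffices to show that $(\mathscr{C}_I,\tau)$ cannot be compact.

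To this end I would note that $\mathscr{C}(p,q)$ is a subsemigroup of $\mathscr{C}_I$ isomorphic to the bicyclic monoid. This is immediate from $\mathscr{C}_I=\mathscr{C}(p,q)\sqcup I$ with $I$ an ideal and $\mathscr{C}_I/I\cong\mathscr{C}^0$: since every product of two elements of $\mathscr{C}(p,q)$ is non-zero in $\mathscr{C}^0$, such a product never falls into $I$ inside $\mathscr{C}_I$, and so the multiplication on $\mathscr{C}(p,q)$ coincides with the bicyclic operation. Equipped with the subspace topology, $\mathscr{C}(p,q)$ is thus a topological subsemigroup of $(\mathscr{C}_I,\tau)$, the restriction of a jointly continuous operation being again jointly continuous.

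Now suppose, for contradiction, that $(\mathscr{C}_I,\tau)$ is compact. Then $(\mathscr{C}_I,\tau)$ is a Hausdorff compact topological semigroup containing an isomorphic copy of the bicyclic monoid $\mathscr{C}(p,q)$. This contradicts the result of Anderson, Hunter and Koch~\cite{Anderson-Hunter-Koch-1965}, according to which the bicyclic semigroup does not embed into any Hausdorff compact topological semigroup. Hence the compact alternative of Theorem~\ref{theorem-3.2} is impossible, and we conclude that the ideal $I$ is open.

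There is no genuinely hard step here; the only point requiring a little care is the verification that the embedding hypothesis of the Anderson--Hunter--Koch theorem is actually met, namely that $\mathscr{C}(p,q)$ sits inside $\mathscr{C}_I$ both algebraically as the bicyclic monoid and topologically as a subsemigroup. Once this is in place, the corollary follows at once from the dichotomy of Theorem~\ref{theorem-3.2}.
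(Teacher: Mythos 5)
Your proof is correct and is exactly the argument the paper intends: the corollary is stated without proof, to be read off from Theorem~\ref{theorem-3.2} together with the Anderson--Hunter--Koch result that the bicyclic monoid embeds in no Hausdorff compact topological semigroup, precisely mirroring how Corollary~\ref{corollary-2.10} follows from Theorem~\ref{theorem-2.9}. Your extra verification that $\mathscr{C}(p,q)$ sits in $(\mathscr{C}_I,\tau)$ as a topological subsemigroup isomorphic to the bicyclic monoid is a reasonable (if, in the paper's reading of the hypothesis $\mathscr{C}_I=\mathscr{C}(p,q)\sqcup I$, essentially built-in) point of care.
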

%%%%%%%%%%%%%%%%%%%%%%%%%%%%%%%%%%%%%%%%%%%%%%%%%%%%%%%%%%%%

\section*{Acknowledgements}

The author acknowledges T. Banakh and A. Ravsky for their comments and suggestions.
%%%%%%%%%%%%%%%%%%%%%%%%%%%%%%%%%%%%%%%%%%%%%%%%%%%%%%%%%%%%
%%%%%%%%%%%%%%%%%%%%%%%%%%%%%%%%%%%%%%%%%%%%%%%%%%%%%%%%%%%%

\end{document}